\newtheorem{teo}{Theorem}[section]
\newtheorem{pro}[teo]{Proposition}
\newtheorem{lem}[teo]{Lemma}
\newtheorem{cor}[teo]{Corollary}
\newtheorem{rem}[teo]{Remark}
\title{Schur-Weyl duality and the Product of randomly-rotated symmetries by a unitary Brownian motion} 
\author[N. Demni]{Nizar Demni}
\address{IRMAR, Universit\'e de Rennes 1\\ Campus de
Beaulieu\\ 35042 Rennes cedex\\ France}
\email{nizar.demni@univ-rennes1.fr}
\author[T. Hamdi]{Tarek Hamdi}
\address{Department of Information Management System \\ College of Business Administration  \\ Qassim University \\ Buraydah  51452 \\ Saudi Arabia 
and Laboratoire d'Analyse Math\'ematiques et applications LR11ES11 \\ Universit\'e de Tunis El-Manar \\ Tunisie}
\email{t.hamdi@qu.edu.sa}
\keywords{Brownian motion in the unitary group; Schur-Weyl duality; Self-adjoint symmetries, Hermitian matrix-Jacobi process; Free unitary Brownian motion.} 
\subjclass[2010]{15B52; 46L54; 60J60}
\begin{document}
\maketitle
\begin{abstract} 
In this paper, we introduce and study a unitary matrix-valued process which is closely related to the Hermitian matrix-Jacobi process. It is precisely defined as the product of a deterministic self-adjoint symmetry and a randomly-rotated one by a unitary Brownian motion. Using stochastic calculus and the action of the symmetric group on tensor powers, we derive an autonomous ordinary differential equation for the moments of its fixed-time marginals. Next, we derive an expression of these moments which involves a unitary bridge between our unitary process and another independent unitary Brownian motion. This bridge motivates and allows to write a second direct proof of the obtained moment expression.
\end{abstract}

\tableofcontents

\section{Introduction}

The Jacobi Unitary Ensemble (JUE) is a unitarily-invariant matrix model which admits various relevant applications including multivariate analysis of variance (\cite{Joh}), statistical physics (\cite{Erd-Far}, \cite{Viv}) and optical-fiber communication (\cite{DFS}, \cite{BDN}). 
Being a multivariate extension of the Beta distribution, it is naturally built out of independent and invertible Wishart matrices. As proved in \cite{Col}, it is also distributed as the product of randomly rotated projections by a Haar unitary matrix. As such,  
the JUE encodes the statistical information about the angles between two uniformly randomly-rotated subspaces. In the large-size limit, these subspaces behave like freely-independent orthogonal projections in some $W^{\star}$-probability space 
$(\mathcal{A}, \tau)$. More generally, given a Haar unitary operator $U$ and two sub-algebras $A, B$ of $\mathcal{A}$ which are freely independent from $\{U,U^{\star}\}$, then $A$ and the rotated sub-algebra $UBU^{\star}$ are freely-independent as well. This is no longer true if we replace $U$ by any fixed-time marginal of a free unitary Brownian motion $(U_t)_{t\geq0}$ (which is freely independent from $A, B$) except in the large-time limit, since $(U_t)_{t\geq0}$ converges strongly to a Haar unitary operator (\cite{CDK}). Nonetheless, when each subalgebra is generated by a single orthogonal projection, one gets an instance of the so-called Voiculescu's liberation process (\cite{Voi}) or in a different guise, the free Jacobi process (\cite{Dem}). In this respect, an extensive spectral study of fixed-time marginals of this process was performed in \cite{Hia-Miy-Ued, Hia-Ued, DHH, Dem-Hmi, CK, Izu-Ued, Demni, Dem-Ham,Tar1,Tar2,Tar3}. In particular, it turns out that the spectral dynamics of the free Jacobi process are governed by that of 
\begin{equation*}
A_t:= RU_tSU_t^{\star}, \quad t \geq 0,
\end{equation*}
where $R, S$ are self-adjoint symmetries which are freely-independent from $\{U,U^{\star}\}$. This unitary process appears naturally in the binomial-type formula proved in \cite{Tar3} and implicitly in \cite{Izu-Ued} where the authors make use of the so-called Geronimus trick. It is also a deformation of the free unitary Brownian motion up to a deterministic time-change. Indeed, if $R = S, \tau(S) = 0$ then Lemma 3.8 in \cite{Haa-Lar} implies that $(SU_tS)_{t \geq 0}$ and $(U_t^{\star})_{t \geq 0}$ are freely-independent which,  together with the multiplicative convolution property of the law of $U_t$, show that $(A_t)_{t \geq 0}$ is distributed as $(U_{2t})_{t \geq 0}$ in $(\mathcal{A}, \tau)$. A different form of this claim states that equality in distribution at fixed time still holds when $\tau(R) = \tau(S) = 0$  
(\cite{Izu-Ued}, \cite{Tar3}). More generally, if $R \neq S$ and $\tau(S) = 0$, then we can write 
\begin{equation*}
A_t = (RS)(SU_tSU_t^{\star}) = (RS)(SU_tS)(U_t^{\star})
\end{equation*}
and infer from Haagerup-Larsen Lemma cited above that $(SU_tS)_{t \geq 0}$ and $(U_t^{\star})_{t \geq 0}$ are freely-independent. We can check further from the very definition of freeness that $(RS)$ and $(SU_tS)_{t \geq 0}$ are so. However, $(RS)$ and $SU_tSU_t^{\star}$ are not freely-independent in general since otherwise, $A_t$ would have the same distribution as $RSU_{2t}$ which is absolutely continuous with respect to the Haar measure on the circle (\cite{Zho}). This is in contradiction with the Lebesgue decomposition of the spectral measure of $A_t$ which admits an atomic part whenever $\tau(R) \neq 0$ (\cite{Tar3}). In a nutshell, the distribution of $RU_tSU_t^{\star}$ is somehow close to that of $U_{2t}$ but, up to our best knowledge, we can not relate both of them using freeness properties except when $\tau(R) = \tau(S) = 0$. 

In order to get more insight into the moments of $A_t$ and in particular to see how close they are to the moments of $U_{2t}$, we shall be interested in the matrix-valued unitary process: 
\begin{equation*}
A_t^N:=R^NU_t^NS^N(U_t^N)^{\star}, \quad t \geq 0,
\end{equation*}
where now $(U_t^N)_{t \geq 0}$ is a $N \times N$ Brownian motion on the unitary group, and $R^N, S^N$ are deterministic matrix-valued self-adjoint symmetries. By analogy with the free setting, this process is connected with the so-called Hermitian matrix-Jacobi process. More precisely, if $\mathbb{E}$ stands for the expectation of the underlying probability space and 
\begin{equation*}
P^N := \frac{I^N+R^N}{2}, \quad Q^N := \frac{I^N+S^N}{2},
\end{equation*}
are the orthogonal projections associated to $R^N, S^N$ ($I^N$ being the $N \times N$ identity matrix), then one has for any $k \geq 1$:
\begin{multline}\label{Binom}
\frac{1}{N}\mathbb{E}\left[\textrm{Tr}(P^NU_t^NQ^N(U_t^N)^{\star}P^N)^k\right] 
=\frac{1}{2^{2k+1}} \binom{2k}{k} + \frac{1}{4N}\textrm{Tr}(R^N+ S^N) +\\ 
\frac{1}{2^{2k}}\sum_{n=1}^k \binom{2k}{k-n}\frac{1}{N}\mathbb{E}\left[\textrm{Tr}((A_t^N)^n)\right],
\end{multline}
where $\textrm{Tr}$ denotes the trace functional. In \cite{DD}, an expression of the LHS of \eqref{Binom} was obtained relying on the semi-group density of the Hermitian Jacobi process, yet it does not allow in its present form to prove a large-N limiting result. 
As a matter of fact, it is challenging to seek more simpler expressions which open the way to compute the moments of the free Jacobi process so far known in few cases. 

On the other hand, combinatorial integration formulas for the expectation of traces of tensor powers of $U_t^N$ were obtained in \cite{Levy} and subsequently in \cite{Dah} where tensor powers of the complex-conjugate of an independent copy of $U_t^N$ are further allowed. However, we can not appeal to these formulas directly to derive expressions for 
\begin{equation}\label{TF}
F_n^N(t):= \frac{1}{N}\mathbb{E}\left[\mathop{\textrm{Tr}}((A_t^N)^n)\right], \quad n \geq 1,
\end{equation}
since in our setting, both $U_t^N$ and its adjoint are gathered in the same trace functional. Nonetheless, the main ideas used in those papers may be adapted here to compute the sequence $F_n^N(t), n \geq 1$. Indeed, the trace functional 
\eqref{TF} may be `linearized by substituting the powers $(A_t^N)^n$ in the space $\mathcal{M}_N$ of $N \times N$ matrices by the $n$-fold tensor power $(A_t^N)^{\otimes n}$ in $\mathcal{M}_N^{\otimes n}$. Moreover, the action of the symmetric group $S_n$ on tensors  leads to: 
\begin{align*}
\mathbb{E}\left[\textrm{Tr}(A_t^N)^n)\right] = \mathbb{E}\left[\textrm{Tr}\left((1\ldots n)(A_t^N)^{\otimes n}\right)\right]= \textrm{Tr}\left((1\ldots n)\mathbb{E}[(A_t^N)^{\otimes n}]\right),
\end{align*}
where we use the same notation for the trace functional in $\mathcal{M}_N^{\otimes n}$. 

Our first main result is the following Theorem where we prove the finite-dimensional analogue of the ordinary differential equation (ODE) satisfied by $RUSU^{\star}$ and proved in \cite[Proposition 2.1]{Tar1}:
\begin{teo}\label{ode}
For any $n\geq 2$,
\begin{align*}
\frac{d}{dt} F_n^N(t)  =&-nF_n^N(t) -n\sum_{ p=1}^{ n-1}\mathbb{E}[\textrm{tr}((A_t^N)^{p})\textrm{tr}((A_t^N)^{n-p})]  +
 \begin{cases}
n^2\alpha^N\beta^N \quad: {\rm n\ odd}
\\ \displaystyle \frac{n^2}{2}((\alpha^N)^2+(\beta^N)^2)\quad: {\rm n\ even}
\end{cases},
\end{align*}
where 
\begin{equation*}
\alpha^N := \frac{1}{N}\textrm{Tr}(R^N), \quad \beta^N:= \frac{1}{N}\textrm{Tr}(S^N). 
\end{equation*}
\end{teo}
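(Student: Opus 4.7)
The plan is to lift the process to the $n$-fold tensor power $\mathcal{M}_N^{\otimes n}$, apply It\^o's formula to $(A_t^N)^{\otimes n}$, and take the trace against the cyclic permutation $(1\,2\,\ldots\,n)\in S_n$, as sketched in the paragraph preceding the theorem. Schur--Weyl duality converts the It\^o covariations of $U_t^N$ into transpositions acting on tensor slots, and the $S_n$-invariance of $R^{\otimes n}, S^{\otimes n}, U_t^{\otimes n}, (U_t^\star)^{\otimes n}$ lets us slide permutations freely through the rearrangement $(A_t^N)^{\otimes n} = R^{\otimes n}\,U_t^{\otimes n}\,S^{\otimes n}\,(U_t^\star)^{\otimes n}$.

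Starting from the It\^o form $dU_t^N = iU_t^N\,dH_t - \tfrac{1}{2}U_t^N\,dt$ with $d\langle H^{ab}, H^{cd}\rangle_t = \tfrac{1}{N}\delta_{ad}\delta_{bc}\,dt$, the entry-wise covariations of $U_t^N$ lift to distinct tensor slots via the operator identity $d\langle H^{(i)}, H^{(j)}\rangle_t = \tfrac{1}{N}\tau_{ij}\,dt$, where $\tau_{ij}\in S_n$ is the transposition of slots $i$ and $j$. Applying the It\^o product rule and taking expectation produces a drift with four contributions: a self-drift $-n\,\mathbb{E}[(A_t^N)^{\otimes n}]$; an intra-$U$ family indexed by pairs $i<j$, each placing $-\tfrac{1}{N}\tau_{ij}$ on the right of $(A_t^N)^{\otimes n}$; an intra-$U^\star$ family of identical form; and an $n^2$-term $U$--$U^\star$ cross family in which the diagonal pieces ($i=j$) produce a factor $\beta^N$ in slot $i$ via the partial-trace identity $dH\,S\,dH = \beta^N\,I\,dt$, while the off-diagonal pieces produce $+\tfrac{1}{N}\tau_{ij}$ after using $S^2 = I_N$ to collapse the $S$-sandwich in $dH\,S^{\otimes n}\,dH$. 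Multiplying the ODE by $(1\,\ldots\,n)$ and taking the trace on $(\mathbb{C}^N)^{\otimes n}$ invokes the cycle-factorization identity
\[
\textrm{Tr}\big(\sigma\,X^{\otimes n}\big) = \prod_{c\,\in\,\text{cycles}(\sigma)} \textrm{Tr}\big(X^{|c|}\big),
\]
together with the fact that $(1\,\ldots\,n)\tau_{ij}$ always splits into two disjoint cycles of lengths $p$ and $n-p$ with $p\in\{1,\ldots,n-1\}$.

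The intra-$U$ and intra-$U^\star$ contributions each yield $-\tfrac{1}{N}\sum_{i<j}\textrm{Tr}((A_t^N)^{p})\textrm{Tr}((A_t^N)^{n-p})$ with $p=j-i$; grouping pairs by $p$ gives weight $n-p$, and the palindromic symmetry $\textrm{Tr}(A^p)\textrm{Tr}(A^{n-p})=\textrm{Tr}(A^{n-p})\textrm{Tr}(A^p)$ folds this weight into $n/2$, producing $-\tfrac{n}{2N}\sum_{p=1}^{n-1}\textrm{Tr}(A^p)\textrm{Tr}(A^{n-p})$ per family. Combined, they give (after dividing by $N$) the claimed main-sum term $-n\sum_{p=1}^{n-1}\mathbb{E}[\textrm{tr}((A_t^N)^p)\textrm{tr}((A_t^N)^{n-p})]$. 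The $U$--$U^\star$ contributions instead produce traces of the form $\textrm{Tr}(R\,(A_t^N)^k)$, since the inserted transposition scrambles the positions of $R$'s and $S$'s. The key auxiliary fact, provable by introducing the symmetry $V := USU^\star$ (which satisfies $V^2 = I_N$ since $S^2 = I_N$) and using $RA = V$ together with trace cyclicity, is that
\[
\textrm{Tr}\big(R\,(A_t^N)^k\big) =
\begin{cases}
N\alpha^N & \text{if $k$ is even,}\\
N\beta^N & \text{if $k$ is odd.}
\end{cases}
\]
Consequently the $U$--$U^\star$ family collapses to expressions purely in $\alpha^N$ and $\beta^N$; the parity of $n$ decides whether $\textrm{Tr}(R\,A^{p-1})\textrm{Tr}(R\,A^{n-p-1})$ becomes $\alpha^N\beta^N$ (when $n$ is odd, so $p-1$ and $n-p-1$ have opposite parities) or one of $(\alpha^N)^2,(\beta^N)^2$ (when $n$ is even). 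A careful pair count combining the off-diagonal contributions with the $n$ diagonal pieces $\beta^N\,\textrm{Tr}(R\,(A_t^N)^{n-1})$ then produces exactly $n^2\alpha^N\beta^N$ in the odd case and $\tfrac{n^2}{2}((\alpha^N)^2+(\beta^N)^2)$ in the even case.

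The main obstacle is the algebraic and combinatorial bookkeeping in the cross-covariation family: establishing the collapse identity for $\textrm{Tr}(R\,(A_t^N)^k)$ (a short cyclic computation using $R^2 = S^2 = V^2 = I_N$) and then performing the parity-weighted pair count so that the diagonal and off-diagonal contributions combine into the correct $n^2$ and $n^2/2$ prefactors. Everything else is routine It\^o calculus and Schur--Weyl manipulation.
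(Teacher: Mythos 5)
Your proposal is correct and follows essentially the same route as the paper: lifting to $(A_t^N)^{\otimes n}$, applying It\^o's formula, converting the quadratic covariations into transpositions and $E_{k,l}$-sandwiched terms via Schur--Weyl, tracing against the cycle $(1\ldots n)$ with the identity $\sum_{k,l}E_{k,l}ME_{l,k}=\textrm{Tr}(M)I$, and collapsing the cross-family via the deterministic parity identity for $\textrm{Tr}(R(A_t^N)^k)$. All the key ingredients, signs, normalizations and the final parity-weighted count match the paper's argument.
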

As to the stochastic differential equation (SDE) satisfied by $(A_t^N)_{t \geq 0}$ it is not autonomous in contrast with the Hermitian Jacobi process (see e.g. \cite{DD}), and it yields in turn a non autonomous ODE for 
\begin{equation*}
G_n^N(t) := \mathbb{E}[(A_t^N)^{\otimes n}], \quad n \geq 2. 
\end{equation*}

Our second main result is partly independent from the first one and provides an expression of $F_n^N(t)$ which we obtain after solving the ODE satisfied by $(1\dots n)G_n^N$. 

\begin{teo}\label{expression}
Define
\begin{equation*}
\nu_n^N(t):= \frac{1}{N}\mathbb{E}\left[\textrm{Tr}(R^NS^NU_{t}^N)^n\right],
\end{equation*}
and let $(V_t^N)_{t \geq 0}$ be an independent copy of $(U_t^N)_{t \geq 0}$. Then, for any $n\geq 2$,
\begin{align}\label{Exp}
F_n^N(t)=&\nu_n^N(2 t)+ \beta^N\frac{n}{N} \int_0^t\mathbb{E}\left[\textrm{Tr}\left((B_s^N)^{ n-1}V_{2(t-s)}^NR^N\right)\right]ds \nonumber 
\\&+\frac{n}{N^2}\sum_{i=1}^{ n-1}\int_0^t\mathbb{E}\left[\textrm{Tr}\left((B_s^N)^{n-i-1}V_{2(t-s)}^NR^N\right)\textrm{Tr}\left((B_s^N)^{i-1}V_{2(t-s)}^N R^N\right)\right]ds,
\end{align}
where $(B_s^N)_{0 \leq s \leq t}$ denotes the unitary bridge $\left(V_{2(t-s)}^NA_s^N\right)_{0 \leq s \leq t}$.
\end{teo}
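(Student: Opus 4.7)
The strategy is to solve a linear ODE for the tensor-valued quantity $G_n^N(t)=\mathbb{E}[(A_t^N)^{\otimes n}]$ on $\mathcal{M}_N^{\otimes n}$ by variation of constants, and then trace the result against the cycle $(1\ldots n)\in S_n$.

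Starting from the SDE for $U_t^N$ and applying It\^o's formula to $(A_t^N)^{\otimes n}=(R^N)^{\otimes n}(U_t^N)^{\otimes n}(S^N)^{\otimes n}((U_t^N)^\star)^{\otimes n}$, I would first derive an evolution equation of the form
\[
\frac{d}{dt}G_n^N(t)=\mathcal{L}_n\,G_n^N(t)+\mathcal{S}_n(t),
\]
where $\mathcal{L}_n$, by Schur--Weyl duality, is the generator of the $n$-fold tensor power of a unitary Brownian motion run at double speed (a linear combination of the identity and transpositions of $S_n$ arising from the It\^o cross-terms between $U_t^N$ and $(U_t^N)^\star$), and $\mathcal{S}_n(t)$ collects the remaining It\^o contractions. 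The latter produce scalar factors $\beta^N=\textrm{tr}(S^N)$ (from $S^N$-contractions) together with residual tensor factors carrying $R^N$ and $A_s^N$-legs.

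Duhamel's formula then gives
\[
G_n^N(t)=e^{t\mathcal{L}_n}G_n^N(0)+\int_0^t e^{(t-s)\mathcal{L}_n}\mathcal{S}_n(s)\,ds.
\]
Introducing an independent copy $V^N$ of $U^N$, one identifies $e^{\tau\mathcal{L}_n}$ with the map $M\mapsto\mathbb{E}[(V_{2\tau}^N)^{\otimes n}M]$ (both sides solve the same Kolmogorov equation, the speed-up by $2$ accounting for the two unitary legs present in $A_t^N$). Since $G_n^N(0)=(R^N S^N)^{\otimes n}$, tracing the homogeneous term against $(1\ldots n)$ yields $\frac{1}{N}\mathbb{E}[\textrm{Tr}((V_{2t}^NR^NS^N)^n)]=\nu_n^N(2t)$ by cyclicity. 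Inside the Duhamel integral, the prefactor $(V_{2(t-s)}^N)^{\otimes n}$ merges with the $A_s^N$-legs in $\mathcal{S}_n(s)$ to form the bridge $B_s^N=V_{2(t-s)}^NA_s^N$, while a residual $R^N$ attaches to a trailing $V_{2(t-s)}^N$, producing the $V_{2(t-s)}^NR^N$ blocks visible in \eqref{Exp}.

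A combinatorial decomposition finally separates the source contributions into two classes: transpositions $(i,j)$ whose product with $(1\ldots n)$ is again an $n$-cycle yield the single-trace integrand with coefficient $\beta^N n/N$, while those factoring $(1\ldots n)$ into disjoint cycles of lengths $i$ and $n-i$ yield the double-trace integrand with coefficient $n/N^2$. The main obstacle will be this symmetric-group bookkeeping: precisely enumerating the transpositions appearing in $\mathcal{L}_n$ and $\mathcal{S}_n$ and verifying that the remaining $R^N$ and $S^N$ factors collapse as asserted. Once set up, the very appearance of $B_s^N$ suggests (and the authors announce) a second, more direct proof via It\^o's formula applied to $s\mapsto\frac{1}{N}\mathbb{E}[\textrm{Tr}((B_s^N)^n)]$, with boundary values $\nu_n^N(2t)$ at $s=0$ and $F_n^N(t)$ at $s=t$.
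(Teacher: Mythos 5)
Your plan follows the same route as the paper's first proof: derive the Itô SDE for $(A_t^N)^{\otimes n}$, take expectations to get a linear ODE for $G_n^N(t)$ whose homogeneous part is $-\bigl[n+\tfrac{2}{N}\sum_{i<j}[(ij)]\bigr]$, solve by Duhamel identifying the propagator with left multiplication by $\mathbb{E}[(V_{2\tau}^N)^{\otimes n}]$ for an independent copy $V^N$ (this is exactly how the bridge $B_s^N=V_{2(t-s)}^NA_s^N$ appears), and finally trace against $[(1\ldots n)]$ using the contraction identity $\sum_{k,l}E_{k,l}ME_{l,k}=\mathrm{Tr}(M)I$. The boundary identification $H_n(0)=\nu_n^N(2t)$, $H_n(t)=F_n^N(t)$ that you mention at the end is precisely the paper's second proof.

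One concrete point in your combinatorial bookkeeping would fail as stated. You attribute the single-trace term with coefficient $\beta^N n/N$ to ``transpositions $(i,j)$ whose product with $(1\ldots n)$ is again an $n$-cycle.'' There are no such transpositions: for every $1\leq i<j\leq n$ the product $(1\ldots n)(ij)$ splits into the two disjoint cycles $(1\ldots i\,(j+1)\ldots n)$ and $((i+1)\ldots j)$, so the transposition sector produces only double-trace terms (together with the $-nF_n^N$ and $-n\sum_p \mathrm{tr}\,\mathrm{tr}$ contributions absorbed into the homogeneous part). The $\beta^N$ term has a different origin: the finite-variation part of $dA_t$ contains $\beta R\,dt$ coming from the Itô bracket $R(dU_t)S(dU_t^{\star})=\beta R\,dt$, which after writing $R=A_t\,U_tSU_t^{\star}$ contributes the single-leg insertions $\beta\sum_{i=1}^n A_t^{\otimes n}(U_tSU_t^{\star})_i$ to the source; tracing each of these against the $n$-cycle gives $\beta\,\mathrm{tr}(B_s^{n-1}V_{2(t-s)}R)$, and the $n$ identical summands give the factor $n$. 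If you keep your classification literally, the $\beta^N$ term never appears. With that source term correctly identified, the rest of your plan goes through exactly as in the paper.
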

By standard arguments on asymptotic freeness, we already know that the LHS of \eqref{Exp} as well as the first and the second terms of its RHS converge in the large-N limit. Consequently, the last term of the RHS converges too. Moreover, the $n$-th moment 
$\nu_n^N(2 t)$ converges as $N \rightarrow \infty$ to the $n$-th moment of $RSU_{2t}$. As a matter of fact, if $\beta^N \rightarrow 0$ then the second term of the RHS of \eqref{Exp} vanishes in the large-N limit while the last one gives the price to compensate for the free-dependence between $RS$ and $SUSU^{\star}$ alluded to before when $\tau(S) = 0$. On the other hand, if $\textrm{Tr}(R^N) = 0$ then the invariance of the distribution of $(U_t^N)_{t \geq 0}$ under the adjoint action of the unitary group shows that 
\begin{equation*}
\mathbb{E}\left[\textrm{Tr}\left((B_s^N)^{ n-1}V_{2(t-s)}^NR^N\right)\right] = 0 
\end{equation*}
and that the integrand of the last term of the RHS of \eqref{Exp} reduces to the covariance of 
\begin{equation*}
\textrm{Tr}\left((B_s^N)^{n-i-1}V_{2(t-s)}^NR^N\right), \quad \textrm{Tr}\left((B_s^N)^{i-1}V_{2(t-s)}^N R^N\right).
\end{equation*}
Finally, the occurrence of the unitary bridge  $(B_s)_{0 \leq s \leq t}$ is intriguing and it would be quite interesting to justify it devoid of analysis. Nonetheless, we shall use this process to write another direct proof of Theorem \ref{expression}.

The paper is organized as follows. In the next section, we recall some facts about the unitary Brownian motion and its relation to the Schur-Weyl duality. In section 3, we perform the stochastic analysis of the tensor power process $((A_t^N)^{\otimes n})_{t \geq 0}$ and deduce the ODE for $(G_n^N(t))_{n \geq 1}$. In the same section, we derive the autonomous ODE for $(F_n^N(t))_{n \geq 1}$ which is of independent interest. In section 4, we prove both Theorems \ref{ode} and \ref{expression}. The last section contains the second proof of Theorem \ref{expression}. 
 
For ease of notations, we shall ommit the dependence on $N$ of the matrices occurring below and hope there will be no confusion with the notations of their free counterparts. We shall also denote $\textrm{tr}$ the normalized trace functional $(1/N)\textrm{Tr}$ acting either on $\mathcal{M}_N$ or on $\mathcal{M}_N^{\otimes n}$.

\section{Reminder: unitary Brownian motion and Schur Weyl duality}
\subsection{Brownian motion in $\mathbb{U}_N$}
Let $\mathbb{U}_N$ be the group of $N \times N$ unitary matrices and $\mathrm{u}_N$ be its Lie algebra of skew-Hermitian matrices in $\mathcal{M}_N$ equipped with the normalized killing form\footnote{This scaling is needed in order to get a non trivial large-N limit, \cite{Levy}.}:
\begin{align*}
\langle A,B \rangle = -N\mathop{\textrm{Tr}}(AB),
\end{align*}
Fix an orthonormal basis $\mathcal{B}$ of $\mathrm{u}_N$. Then, the Brownian motion in $\mathrm{u}_N$ is the skew-Hermitian process 
 $(X_t)_{t \geq 0}$ defined by
 \begin{equation*}
 X_t:=\sum_{\xi\in \mathcal{B}} B_{\xi}(t) \xi,
 \end{equation*}
where $\{B_{\xi}:\xi \in \mathcal{B}\}$ are i.i.d. standard real Brownian motions. This process is independent of the choice of the orthonormal basis and as such, we shall choose in the sequel: 
\begin{align*}
\mathcal{B}=\bigg\{\frac{1}{\sqrt{2N}}(E_{k,l}-E_{l,k}),\frac{i}{\sqrt{2N}}(E_{k,l}+E_{l,k}): 1\leq k<l\leq N\bigg\}\cup\bigg\{\frac{i}{\sqrt{N}}E_{k,k}:1\leq k \leq N\bigg\}.
\end{align*}

The corresponding Brownian motion $(U_t)_{t \geq 0}$ in $\mathbb{U}_N$ is then obtained by wrapping the skew-Hermitian Brownian motion $X_N$ (\cite{Lia}). More concretely, it is the unique strong solution of the following stochastic differential equation 
(hearafter SDE): 
\begin{equation*}
dU_t = U_t dX_t-\frac{1}{2}U_tdt, \quad U_0=I.
\end{equation*}
It is also a left L\'evy process, that is the right increment $U_s^{\star}U_t$ is independent of $(U_s)_{0 \leq s \leq t}$. This choice is by no means a loss of generality since $(U_t^{\star})_{t \geq 0}$ is a right L\'evy process and has the same distribution as 
$(U_t^{\star})_{t \geq 0}$.

\subsection{Schur-Weyl duality}
Let $V$ be a vector space of dimension $\geq 2$. Then, the symmetric group $S_n$ acts on the tensor power $V^{\otimes n}$ by permuting its factors, namely:
\begin{align*}
[\sigma](v_1\otimes\ldots \otimes v_n)=v_{\sigma^{-1}(1)}\otimes\ldots \otimes v_{\sigma^{-1}(n)}.
\end{align*}
This gives rise to a  representation which is `dual' to the standard representation of the linear group $GL(V)$ on $V^{\otimes n}$:
\begin{align*}
g(v_1\otimes\ldots \otimes v_n)=gv_1\otimes\ldots \otimes gv_n,
\end{align*}
in the sense that these two actions commute and are full mutual centralizers in the algebra End$(V^{\otimes n})$. The last statement, known as the Schur-Weyl duality, plays a key role  in T. L\'evy's approach to the heat kernel on $\mathbb{U}_N$ (\cite{Levy}). 
Take $V=\mathbb{C}^N$ so that $\mathcal{M}_N = \textrm{End}(V)$. Then, for any permutation $\sigma \in S_n$ with cycle decomposition 
\begin{align*}
\sigma=(i_1^1\ldots i_{l_1}^1)\ldots(i_1^r\ldots i_{l_r}^r),
\end{align*}
and for any collection $M_1,\ldots M_n\in\mathcal{M}_N$, we have
\begin{align}\label{trace}
\textrm{Tr}\big([\sigma](M_1\otimes\ldots \otimes M_n)  \big)= \textrm{Tr} \big(M_{i_{l_1}^1}\ldots  M_{i_1^1}  \big)\ldots \mathop{\textrm{Tr}}\big(M_{i_{l_r}^r}\ldots  M_{i_1^r}  \big).
\end{align}
In particular, for any one-cycle:
\begin{align*}
\textrm{Tr}\big([(i_1\ldots i_r)](M_1\otimes \ldots \otimes M_n)  \big)=\textrm{Tr}\big(M_{i_r}\ldots  M_{i_1}  \big), 
\end{align*}
and in turn
\begin{align}\label{mom-BM}
\mu_n^N(t) := \mathbb{E}\left[\mathop{\textrm{tr}}(U_{t}^N)^n\right] = \mathbb{E}\left[\mathop{\textrm{tr}}\left((1\ldots n)(U_t^N)^{\otimes n}\right)\right]. 
\end{align}
Moreover, the finite-variation part of the semi-martingale $(U_t)^{\otimes n}$ is given by (\cite{Dah}):
\begin{equation}\label{UBM}
-(U_t)^{\otimes n} \left[\frac{n}{2} + \frac{1}{N}\sum_{1\leq i<j\leq n} [(ij)]\right]dt,
\end{equation}
whence
\begin{align}\label{BM}
\mathbb{E}[(U_t)^{\otimes n}]=e^{-nt/2}\exp\left(-\frac{t}{N}\sum_{1\leq i<j\leq n}[(ij)] \right).
\end{align}
On the other hand, the sequence $\nu_n(t), N \geq n$ was computed explicitly in \cite{Biane} when the traces of $R$ and $S$ vanish simultaneously:
\begin{align*}
\mu_n^N(t)=\frac{e^{-nt/2}}{N}\sum_{k=0}^{n-1}(-1)^k\binom{N+n-1-k}{n}\binom{n-1}{k}e^{-t(n^2-(2k+1)n)/(2N)}.
\end{align*}



\section{Stochastic analysis of $A^{\otimes n}$ and the ODE satisfied by $(G_n)_{n \geq 1}$}
Let $R,S\in \mathcal{M}_N$ be two self-adjoint symmetries and recall the notations $\beta=\mathop{\textrm{tr}}(S), \xi=\mathop{\textrm{tr}}(RS)$. 
We start with the following elementary lemma: 
\begin{lem}\label{firstrank}
The unitary process $(A_t)_{t \geq 0}$ satisfies: 
\begin{align*}
dA_t = R U_t (dX_tS-SdX_t)U_t^{\star}+(\beta R-A_t)dt, \quad A_0=RS.
\end{align*}
\end{lem}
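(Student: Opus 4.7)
My plan is to apply Itô's product rule to $A_t = RU_tSU_t^\ast$ and then identify the Itô correction term via a Casimir-type computation on $\mathrm{u}_N$.

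First, I would differentiate $U_t^\ast$. From $U_tU_t^\ast = I$ and the SDE $dU_t = U_tdX_t - \tfrac{1}{2}U_tdt$, taking adjoints and using $X_t^\ast = -X_t$ gives
\begin{equation*}
dU_t^\ast = -dX_tU_t^\ast - \tfrac{1}{2}U_t^\ast dt.
\end{equation*}
Since $R$ is deterministic, it suffices to compute $d(U_tSU_t^\ast)$ and multiply by $R$ on the left. Itô's product rule yields
\begin{equation*}
d(U_tSU_t^\ast) = dU_t\cdot SU_t^\ast + U_tS\cdot dU_t^\ast + dU_t\cdot S\cdot dU_t^\ast,
\end{equation*}
and the first two terms produce the martingale $U_t(dX_tS - SdX_t)U_t^\ast$ together with the drift $-U_tSU_t^\ast\,dt$ (the two $-\tfrac{1}{2}$ contributions combining).

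The heart of the proof is the bracket term $dU_t\cdot S\cdot dU_t^\ast = -U_t(dX_t\,S\,dX_t)U_t^\ast$. Writing $dX_t = \sum_{\xi\in\mathcal{B}} dB_\xi(t)\,\xi$ and using independence of the driving Brownian motions, this reduces to
\begin{equation*}
dU_t\cdot S\cdot dU_t^\ast = -U_t\Bigl(\sum_{\xi\in\mathcal{B}}\xi S\xi\Bigr)U_t^\ast\,dt.
\end{equation*}
The key identity I need is the Casimir evaluation
\begin{equation*}
\sum_{\xi\in\mathcal{B}}\xi S\xi = -\mathrm{tr}(S)\,I = -\beta I,
\end{equation*}
which I would establish by a direct computation with the explicit orthonormal basis $\mathcal{B}$ displayed in the paper: the diagonal contribution produces $-\tfrac{1}{N}\mathrm{diag}(S)$, while the two families indexed by pairs $k<l$ combine to $-\tfrac{1}{N}[\mathrm{Tr}(S)I - \mathrm{diag}(S)]$, and the diagonal pieces cancel. (Equivalently, this is the $\mathfrak{u}(N)$ Casimir statement $\sum_\xi\xi\otimes\xi = -\tfrac{1}{N}[(12)]$ contracted against $S$.)

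Plugging this in, the quadratic variation contributes $+\beta I\,dt$, so after left-multiplying everything by $R$ I obtain
\begin{equation*}
dA_t = RU_t(dX_tS - SdX_t)U_t^\ast + (\beta R - A_t)\,dt,
\end{equation*}
with $A_0 = RS$ since $U_0 = I$. The only genuine obstacle is the Casimir identity; once that is in hand, the rest is bookkeeping with Itô's formula. I would expect this identity to be invoked repeatedly in the stochastic analysis of $A_t^{\otimes n}$ in Section 3, which explains why the authors isolate the single-tensor case first as a lemma.
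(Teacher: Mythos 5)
Your proof is correct and follows essentially the same route as the paper: Itô's product rule applied to $RU_tSU_t^{\star}$ together with the adjoint SDE for $U_t^{\star}$, with the bracket term $R(dU_t)S(dU_t^{\star})=\beta R\,dt$ identified by the basis computation $\sum_{\xi\in\mathcal{B}}\xi S\xi=-\mathrm{tr}(S)I$ (the paper merely asserts this is "readily checked componentwise", which is exactly the calculation you spell out).
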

\begin{proof}
From It\^o's formula, we have:
\begin{equation*}
dA_t = R[dU_t]SU_t^{\star}+RU_tS[dU_t^{\star}]+R(dU_t)S(dU_t^{\star}),
\end{equation*}
where $R(dU_t)S(dU_t^{\star})$ is the bracket of the semimartingales $RdU_t$ and $SdU_t^{\star}$. Since
\begin{equation*}
dU_t^{\star} = -dX_t^NU_t^{\star}-\frac{1}{2}U_t^{\star}dt,
\end{equation*}
and $R(dU_t)S(dU_t^{\star}) = \beta R$ which is readily checked componentwise, the lemma is proved. 
\end{proof}

Next, we proceed to the stochastic analysis of the $n$-fold tensor power $A^{\otimes n}$. To this end, we introduce the following notations: for any $N \times N$ matrices $M,D$,
\begin{eqnarray*}
(M)_i &= & I^{\otimes i-1}\otimes M\otimes I^{\otimes n-i}, \\
(M\otimes D)_{i,j} &=& I^{\otimes i-1}\otimes M\otimes I^{\otimes j-i-1}\otimes D\otimes I^{\otimes n-j}.
\end{eqnarray*}
Set also $dZ_t := RU_t(dX_tS-SdX_t)U_t^{\star}$ so that $dA_t=dZ_t+(\beta R-A_t)dt$. Then, 
\begin{pro}\label{SDE}
For any $n\geq1$ and any $t\geq0$, one has
\begin{align*}
dA_t^{\otimes n}=&\sum_{i=1}^nA_t^{\otimes i-1}\otimes dZ_t\otimes A_t^{\otimes n-i}+\beta\sum_{i=1}^nA_t^{\otimes n}(U_tSU_t^{\star})_idt
-A_t^{\otimes n}\left[n+\frac{2}{N}\sum_{1\leq i<j\leq n}[(ij)]\right]dt
\\&+\frac{1}{N}\sum_{\substack{
     1\leq i<j\leq n\\
      1\leq k,l\leq N
}} (A_t)^{\otimes n}(U_tSE_{k,l}SU_t^{\star}\otimes U_tE_{l,k}U_t^{\star})_{i,j}dt
\\&+\frac{1}{N}\sum_{\substack{
     1\leq i<j\leq n\\
      1\leq k,l\leq N
}} (A_t)^{\otimes n}(U_tE_{k,l}U_t^{\star}\otimes U_tSE_{l,k}SU_t^{\star})_{i,j}dt.
\end{align*}
\end{pro}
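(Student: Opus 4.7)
The plan is to apply It\^o's product rule to $A_t^{\otimes n}$, substitute Lemma \ref{firstrank}, and reduce every resulting quadratic covariation via a Casimir-type identity for the orthonormal basis $\mathcal{B}$. Concretely, the product rule reads
\[
d(A_t^{\otimes n}) = \sum_{i=1}^n A_t^{\otimes i-1}\otimes dA_t\otimes A_t^{\otimes n-i} + \sum_{1\le i<j\le n} A_t^{\otimes i-1}\otimes dA_t\otimes A_t^{\otimes j-i-1}\otimes dA_t\otimes A_t^{\otimes n-j},
\]
the second sum being a covariation. Inserting $dA_t=dZ_t+(\beta R-A_t)\,dt$, the first sum will produce the martingale piece $\sum_i A_t^{\otimes i-1}\otimes dZ_t\otimes A_t^{\otimes n-i}$ together with the drifts $-nA_t^{\otimes n}\,dt$ and $\beta\sum_i A_t^{\otimes i-1}\otimes R\otimes A_t^{\otimes n-i}\,dt$. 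The latter I would rewrite via the identity $A_t\cdot U_tSU_t^{\star}=R$ (a consequence of $S^2=I$), which gives $A_t^{\otimes i-1}\otimes R\otimes A_t^{\otimes n-i}=A_t^{\otimes n}(U_tSU_t^{\star})_i$ and produces the announced $\beta\sum_i A_t^{\otimes n}(U_tSU_t^\star)_i\,dt$ term.

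The bulk of the work is the bracket at each pair of slots $(i,j)$. Only the martingale part $dZ_t=\sum_{\xi\in\mathcal{B}} RU_t(\xi S-S\xi)U_t^\star\,dB_\xi$ contributes; the bracket thus equals $\sum_{\xi} RU_t(\xi S-S\xi)U_t^\star\otimes RU_t(\xi S-S\xi)U_t^\star\,dt$, with the two factors placed in slots $i$ and $j$. The key input is the Casimir identity
\[
\sum_{\xi\in\mathcal{B}}\xi\otimes\xi=-\frac{1}{N}\sum_{k,l=1}^N E_{k,l}\otimes E_{l,k}=-\frac{1}{N}\tau,
\]
where $\tau$ denotes the flip operator; this is a short direct check from the explicit basis $\mathcal{B}$, and yields the general contraction rule $\sum_{\xi}M\xi N\otimes M'\xi N'=-\frac{1}{N}\sum_{k,l}ME_{k,l}N\otimes M'E_{l,k}N'$ for arbitrary $M,N,M',N'$.

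Expanding $(\xi S-S\xi)\otimes(\xi S-S\xi)$ into four summands and applying the contraction rule produces four tensor-product terms. To factor $A_t^{\otimes n}$ on the left I would use $A_t^{-1}=U_tSU_t^\star R$, which gives the two simplifications
\[
A_t^{-1}\cdot RU_tE_{k,l}SU_t^\star=U_tSE_{k,l}SU_t^\star,\qquad A_t^{-1}\cdot RU_tSE_{k,l}U_t^\star=U_tE_{k,l}U_t^\star.
\]
The two ``pure'' summands $\xi S\otimes\xi S$ and $S\xi\otimes S\xi$ then collapse after summation over $k,l$ to the transposition $[(ij)]$ via the general identity $(X\otimes X)\tau(Y\otimes Y)=(XY\otimes XY)\tau$, applied once with $U_tS\cdot SU_t^\star=I$ and once with $U_tU_t^\star=I$; together they contribute exactly $-\frac{2}{N}A_t^{\otimes n}[(ij)]\,dt$. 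The two cross summands match verbatim the last two lines of the statement. Summing over $1\le i<j\le n$ and assembling all drifts yields the formula.

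The step I expect to be the main obstacle is the bookkeeping above: keeping the ordering of $E_{k,l}$ versus $E_{l,k}$ consistent once $A_t^{-1}$ is slid through the four summands, and confirming that the ``pure'' summands really collapse to $[(ij)]$ with total prefactor $-\frac{2}{N}$ rather than $-\frac{1}{N}$. The remaining steps are mechanical applications of It\^o's formula and the Casimir identity.
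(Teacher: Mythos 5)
Your proposal is correct and follows essentially the same route as the paper: Itô's product rule on the tensor power, Lemma \ref{firstrank} for the drift (with the same identity $A_t^{\star}R=U_tSU_t^{\star}$ to produce the $(U_tSU_t^{\star})_i$ term), and the basis contraction $\sum_{\xi\in\mathcal{B}}\xi\otimes\xi=-\frac{1}{N}\sum_{k,l}E_{k,l}\otimes E_{l,k}$ applied to the four summands of the bracket, the two pure ones collapsing to $-\frac{2}{N}A_t^{\otimes n}[(ij)]$ and the two cross ones giving the remaining sums. Packaging the contraction as a Casimir-type identity up front is only a cosmetic difference from the paper's inline computation, and your bookkeeping of $E_{k,l}$ versus $E_{l,k}$ and of the $-\frac{2}{N}$ prefactor is consistent with the stated formula.
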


\begin{proof}
Applied to the tensor power $A^{\otimes n}$, the It\^o's formula reads:
\begin{align}
dA_t^{\otimes n} &=\sum_{i=1}^nA_t^{\otimes i-1}\otimes dA_t\otimes A_t^{\otimes n-i} +\sum_{1\leq i<j\leq n}A_t^{\otimes i-1}\otimes dA_t\otimes A_t^{\otimes j-i-1}\otimes dA_t\otimes A_t^{\otimes n-j} \nonumber 
\\& = \sum_{i=1}^nA_t^{\otimes i-1}\otimes dA_t\otimes A_t^{\otimes n-i} +\sum_{1\leq i<j\leq n}A_t^{\otimes i-1}\otimes dZ_t \otimes A_t^{\otimes j-i-1}\otimes dZ_t \otimes A_t^{\otimes n-j} \label{ito}.
\end{align}
From Lemma \ref{firstrank}, the first sum becomes:
\begin{align*}
\sum_{i=1}^nA_t^{\otimes i-1}\otimes dZ_t\otimes A_t^{\otimes n-i}+\beta \sum_{i=1}^nA_t^{\otimes i-1}\otimes R\otimes A_t^{\otimes n-i}dt-nA_t^{\otimes n}dt.
\end{align*}
But $A_t$ is a unitary matrix, therefore:
\begin{align*}
A_t^{\otimes i-1}\otimes R\otimes A_t^{\otimes n-i}=A_t^{\otimes n}(A_t^{\star}R)_i=A_t^{\otimes n}(U_tSU_t^{\star})_i,
\end{align*}
whence we conclude that the first sum in \eqref{ito} may be written as:
\begin{align*}
\sum_{i=1}^nA_t^{\otimes i-1}\otimes dZ_t\otimes A_t^{\otimes n-i}+\beta \sum_{i=1}^nA_t^{\otimes n}(U_tSU_t^{\star})_idt-nA_t^{\otimes n}dt.
\end{align*}
Now, consider the $(i,j)$-th term in the second sum in \eqref{ito}: 
\begin{align*}
A_t^{\otimes i-1}\otimes dZ_t\otimes A_t^{\otimes j-i-1}\otimes dZ_t\otimes A_t^{\otimes n-j}.
\end{align*}
From the very definition of $dZ_t$, this term splits into four terms:
\begin{align}
& A_t^{\otimes i-1}\otimes RU_tdX_tSU_t^{\star}\otimes A_t^{\otimes j-i-1}\otimes RU_tdX_tSU_t^{\star}\otimes A_t^{\otimes n-j}\label{s1}
\\& A_t^{\otimes i-1}\otimes RU_tSdX_tU_t^{\star}\otimes A_t^{\otimes j-i-1}\otimes RU_tSdX_tU_t^{\star}\otimes A_t^{\otimes n-j}\label{s2}
\\&- A_t^{\otimes i-1}\otimes RU_tdX_tSU_t^{\star}\otimes A_t^{\otimes j-i-1}\otimes RU_tSdX_tU_t^{\star}\otimes A_t^{\otimes n-j}\label{s3}
\\&- A_t^{\otimes i-1}\otimes RU_tSdX_tU_t^{\star}\otimes A_t^{\otimes j-i-1}\otimes RU_tdX_tSU_t^{\star}\otimes A_t^{\otimes n-j}\label{s4}.
\end{align}
The sum $\eqref{s1} + \eqref{s2}$ may be written as: 
\begin{align*}
&(RU_t)^{\otimes n}I^{\otimes i-1}\otimes dX_t\otimes I^{\otimes j-i-1}\otimes dX_t\otimes I^{\otimes n-j}(SU_t^{\star})^{\otimes n}, 
\\&(RU_tS)^{\otimes n}I^{\otimes i-1}\otimes dX_t\otimes I^{\otimes j-i-1}\otimes dX_t\otimes I^{\otimes n-j}(U_t^{\star})^{\otimes n}.
\end{align*}
Using the decomposition of $X$ in the basis $\mathcal{B}$ and since the bracket of two independent Brownian motions vanish, we get:
\begin{multline*}
\eqref{s1} + \eqref{s2} = -\frac{1}{N}(RU_t)^{\otimes n} \left\{\sum_{k,l=1}^N I^{\otimes i-1}\otimes E_{k,l}\otimes I^{\otimes j-i-1}\otimes E_{l,k}\otimes I^{\otimes n-j}\right\}(SU_t^{\star})^{\otimes n} \\ 
-\frac{1}{N}(RU_tS)^{\otimes n}\left\{\sum_{k,l=1}^N I^{\otimes i-1}\otimes E_{k,l}\otimes I^{\otimes j-i-1}\otimes E_{l,k}\otimes I^{\otimes n-j}\right\}(U_t^{\star})^{\otimes n}.
\end{multline*}
But the terms between brackets act on tensors as the transposition $[(ij)]$ whence  
\begin{align*}
\eqref{s1} + \eqref{s2} = -\frac{1}{N}(RU_t)^{\otimes n}[(ij)](SU_t^{\star})^{\otimes n}dt
-\frac{1}{N}(RU_tS)^{\otimes n}[(ij)](U_t^{\star})^{\otimes n}dt.
\end{align*}
Since the Schur-Weyl representation of any permutation commutes with any tensor power $M^{\otimes n}$, we end up with:
\begin{equation*}
\eqref{s1} + \eqref{s2} = -\frac{2}{N}A_t^{\otimes n}[(ij)]dt.
\end{equation*}
Finally, factoring out $A_t^{\otimes n}$ from \eqref{s3} and \eqref{s4} and decomposing again $X$ as a sum of independent real standard Brownian motions, the same computations lead to: 
\begin{equation}\label{two sums}
   \frac{1}{N} \sum_{1\leq k,l\leq N}A_t^{\otimes n}(U_tSE_{k,l}SU_t^{\star}\otimes U_tE_{l,k}U_t^{\star})_{i,j}+\frac{1}{N} \sum_{1\leq k,l\leq N}A_t^{\otimes n}(U_tE_{k,l}U_t^{\star}\otimes U_tSE_{l,k}SU_t^{\star})_{i,j},
\end{equation}
which prove the proposition.
\end{proof}
\begin{rem}
If the symmetry $S$ is diagonal, then we can readily check that both sums displayed in \eqref{two sums} coincide, therefore $dA_t^{\otimes n}$ reduces to:
\begin{align*}
dA_t^{\otimes n}=&\sum_{i=1}^nA_t^{\otimes i-1}\otimes dZ_t\otimes A_t^{\otimes n-i}+\beta\sum_{i=1}^nA_t^{\otimes n}(U_tSU_t^{\star})_idt
-A_t^{\otimes n}\left[n+\frac{2}{N}\sum_{1\leq i<j\leq n}[(ij)]\right]dt
\\&+\frac{2}{N}\sum_{\substack{
     1\leq i<j\leq n\\
      1\leq k,l\leq N
}} (A_t)^{\otimes n}(U_tSE_{k,l}SU_t^{\star}\otimes U_tE_{l,k}U_t^{\star})_{i,j}dt.
\end{align*}
\end{rem}

Taking the expectation in both sides of the SDE derived in proposition \ref{SDE}, we get the following matrix-valued ODE: 
\begin{cor}\label{tensor_ode}
For any $n\geq 2$,
\begin{align*}
   \frac{d}{dt} G_n(t)=&-G_n(t)\left[n+\frac{2}{N}\sum_{1\leq i<j\leq n}[(ij)]\right]+\beta \mathbb{E}[A_t^{\otimes n}\sum_{i=1}^n(U_tSU_t^{\star})_i]
\\&+\frac{1}{N}\mathbb{E}[A_t^{\otimes n}\sum_{\substack{
     1\leq i<j\leq n\\
      1\leq k,l\leq N
}}(U_tSE_{k,l}SU_t^{\star}\otimes U_tE_{l,k}U_t^{\star})_{i,j}]
\\&+\frac{1}{N}\mathbb{E}[A_t^{\otimes n}\sum_{\substack{
     1\leq i<j\leq n\\
      1\leq k,l\leq N
}}(U_tE_{k,l}U_t^{\star}\otimes U_tSE_{l,k}SU_t^{\star})_{i,j}].
\end{align*}
\end{cor}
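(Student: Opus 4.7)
The corollary is a direct consequence of Proposition \ref{SDE}, so the plan is to take expectations on both sides of the SDE derived there and verify that the stochastic (martingale) part contributes nothing.

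First, I would write the stochastic term as
\begin{equation*}
M_t := \int_0^t \sum_{i=1}^n A_s^{\otimes i-1} \otimes dZ_s \otimes A_s^{\otimes n-i},
\end{equation*}
with $dZ_s = RU_s(dX_s S - S\, dX_s)U_s^{\star}$, and expand $dX_s$ in the orthonormal basis $\mathcal{B}$ of $\mathrm{u}_N$. This produces a finite sum of It\^o integrals against independent real standard Brownian motions $\{B_\xi\}$. Because $A_s$ and $U_s$ are unitary and $R,S$ are self-adjoint symmetries (hence of operator norm $1$), every integrand is an adapted process in $\mathcal{M}_N^{\otimes n}$ whose entries are uniformly bounded in $s,\omega$. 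Consequently each entry of $M_t$ is a true (bounded) martingale vanishing at $t=0$, so $\mathbb{E}[M_t]=0$.

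Next, I would take expectations of the finite-variation part. The permutations $[(ij)]$ in the bracket $[n+(2/N)\sum_{1\leq i<j\leq n}[(ij)]]$ are deterministic tensor operators acting on the right of $A_t^{\otimes n}$; by linearity they pull out of the expectation and leave $-G_n(t)\left[n+(2/N)\sum_{1\leq i<j\leq n}[(ij)]\right]$. The remaining three drift contributions—the $\beta$-term involving $(U_tSU_t^{\star})_i$ and the two double sums indexed by $(i,j)$ and $(k,l)$—must stay inside the expectation since they involve the random matrix $U_t$, and this gives exactly the three remaining terms in the statement.

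Finally, converting the integral identity into differential form is justified by the continuity in $t$ of the drift coefficients (they are polynomials in entries of $U_t, U_t^{\star}$, hence $L^1$-continuous in $t$), which shows $t\mapsto G_n(t)$ is continuously differentiable and produces the announced ODE. The only point requiring care is the martingale property of $M_t$, which is routine given the entrywise boundedness of the integrands, so I do not anticipate a substantial obstacle.
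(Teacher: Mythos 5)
Your proposal is correct and follows exactly the route the paper takes (the paper simply states that one takes expectations in the SDE of Proposition \ref{SDE}, leaving the vanishing of the martingale term implicit). Your added justification—entrywise boundedness of the integrands coming from unitarity of $A_s, U_s$ and the fact that $R,S$ are symmetries, hence the stochastic integral is a true martingale with zero expectation—is the right way to fill in that implicit step, and the rest (pulling the deterministic operator $n+\frac{2}{N}\sum_{i<j}[(ij)]$ out of the expectation, keeping the $U_t$-dependent drift terms inside) matches the paper.
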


Using \eqref{BM}, the solution to this ODE may be written as:
\begin{pro}
For all $n\geq 2$, we have
\begin{align*}
G_n(t)=& [RS]^{\otimes n}\mathbb{E}[(U_{2t})^{\otimes n}]
    +\beta \sum_{i=1}^n\int_0^t \mathbb{E}[(V_{2(t-s)}A_s)^{\otimes n}(U_sSU_s^{\star})_i]ds
    \\&+\frac{1}{N}\sum_{1\leq i<j\leq n}\sum_{1\leq k,l\leq N}\int_0^t \mathbb{E}[(V_{2(t-s)}A_s)^{\otimes n}(U_sSE_{k,l}SU_s^{\star}\otimes U_sE_{l,k}U_s^{\star})_{i,j}]ds
\\&+\frac{1}{N}\sum_{1\leq i<j\leq n}\sum_{1\leq k,l\leq N}\int_0^t \mathbb{E}[(V_{2(t-s)}A_s)^{\otimes n}(U_sE_{k,l}U_s^{\star}\otimes U_sSE_{l,k}SU_s^{\star})_{i,j}]ds,
\end{align*}
where $(V_t)_{t \geq 0}$ is an independent copy of $(U_t)_{t \geq 0}$. 
\end{pro}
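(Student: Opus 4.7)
The plan is to treat the matrix-valued ODE from Corollary \ref{tensor_ode} as a standard first-order linear equation of the form $G_n'(t) = -G_n(t)\, L_n + H(t)$, where $L_n := n + \frac{2}{N}\sum_{1\leq i<j\leq n}[(ij)]$ gathers the ``dissipative'' part and $H(t)$ stands for the three inhomogeneous terms appearing in the corollary. I will solve this equation by variation of constants. The key input is \eqref{BM} evaluated at time $2t$, which identifies the semigroup $e^{-tL_n}$ with the deterministic matrix $\mathbb{E}[(U_{2t})^{\otimes n}]$.

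Before applying Duhamel, I would record the commutation relation $G_n(t) L_n = L_n G_n(t)$. Since $L_n$ is a linear combination of the identity and transpositions, Schur-Weyl duality ensures that it commutes with every tensor power $M^{\otimes n}$, and taking expectations transfers this property to $G_n(t) = \mathbb{E}[A_t^{\otimes n}]$. Hence the ODE is equivalent to $G_n'(t) + L_n G_n(t) = H(t)$, and variation of constants yields
\begin{equation*}
G_n(t) = e^{-tL_n}(RS)^{\otimes n} + \int_0^t e^{-(t-s)L_n}\, H(s)\, ds.
\end{equation*}
Commuting $(RS)^{\otimes n}$ across $e^{-tL_n}$ (once more by Schur-Weyl) and substituting $e^{-tL_n} = \mathbb{E}[(U_{2t})^{\otimes n}]$ reproduces the first term $[RS]^{\otimes n}\mathbb{E}[(U_{2t})^{\otimes n}]$ of the proposition.

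For the Duhamel integral, I would introduce the independent copy $(V_t)_{t \geq 0}$ so that $e^{-(t-s)L_n} = \mathbb{E}[(V_{2(t-s)})^{\otimes n}]$ and exploit the independence of $V$ from $(U_s)_{s \geq 0}$. Each summand of $H(s)$ has the form $\mathbb{E}[A_s^{\otimes n} Y_s]$, where $Y_s$ is a function of $U_s, R, S$ and elementary matrices, hence independent of $V$. Independence then gives
\begin{equation*}
\mathbb{E}[(V_{2(t-s)})^{\otimes n}]\, \mathbb{E}[A_s^{\otimes n} Y_s] = \mathbb{E}[(V_{2(t-s)})^{\otimes n} A_s^{\otimes n} Y_s] = \mathbb{E}[(V_{2(t-s)} A_s)^{\otimes n} Y_s],
\end{equation*}
using the multiplicativity $V^{\otimes n} A^{\otimes n} = (VA)^{\otimes n}$ of tensor powers. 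Specializing $Y_s$ successively to the single sum $\sum_i (U_s S U_s^{\star})_i$ and to the two double sums over $1\leq i<j\leq n$ and $1\leq k,l\leq N$ reproduces the three remaining integral terms in the statement. The step I would flag as the main subtlety is the preliminary Schur-Weyl commutation $[L_n, G_n(t)] = 0$: it is what legitimates moving the semigroup from right- to left-multiplication in the Duhamel formula, and that rearrangement is precisely what makes the factor $(V_{2(t-s)} A_s)^{\otimes n}$ appear naturally on the left of $Y_s$.
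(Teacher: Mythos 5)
Your argument is correct and follows essentially the same route as the paper: both solve the linear ODE of Corollary \ref{tensor_ode} by variation of constants, identify the propagator $e^{-(t-s)L_n}$ with $\mathbb{E}[(V_{2(t-s)})^{\otimes n}]$ via \eqref{BM} (the paper phrases this through the independent, stationary right increments $U_{2s}^{\star}U_{2t}$ before passing to the independent copy $V$), and then merge the product of expectations into a single one using independence and the multiplicativity of tensor powers. Your explicit verification that $L_n$ commutes with $G_n(t)$ --- needed to move the propagator from right- to left-multiplication --- is a point the paper's proof leaves implicit, and it is the right thing to check.
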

\begin{proof}
Since $G_n$ start at $(RS)^{\otimes n}$, then the solution to the ODE in Lemma  \ref{tensor_ode} is
\begin{align*}
    G_n(t)=&[RS]^{\otimes n} \mathbb{E}[(U_{2t})^{\otimes n}] 
    +\beta \sum_{i=1}^n\int_0^t \mathbb{E}[(U_{2s}^{\star}U_{2t})^{\otimes n}]\mathbb{E}[A_s^{\otimes n}(U_sSU_s^{\star})_i]ds
    \\&+\frac{1}{N}\sum_{1\leq i<j\leq n}\sum_{1\leq k,l\leq N}\int_0^t \mathbb{E}[(U_{2s}^{\star}U_{2t})^{\otimes n}]\mathbb{E}[A_s^{\otimes n}(U_sSE_{k,l}SU_s^{\star}\otimes U_sE_{l,k}U_s^{\star})_{i,j}]ds
\\&+\frac{1}{N}\sum_{1\leq i<j\leq n}\sum_{1\leq k,l\leq N}\int_0^t \mathbb{E}[((U_{2s}^{\star}U_{2t})^{\otimes n})^{\otimes n}]\mathbb{E}[A_s^{\otimes n}(U_sE_{k,l}U_s^{\star}\otimes U_sSE_{l,k}SU_s^{\star})_{i,j}]ds.
\end{align*}
Since $(U_t)_{t \geq 0}$ is a left L\'evy process, then for any $0 \leq s \leq t$, the right increment $(U_{2s}^{\star}U_{2t})^{\otimes n}$ is independent from both $\{U_s, U_s^{\star} = U_s^{-1}\}$. Hence, 
\begin{align*}
    G_n(t) & = [RS]^{\otimes n} \mathbb{E}[(U_{2t})^{\otimes n}] +\beta \sum_{i=1}^n\int_0^t \mathbb{E}[(U_{2s}^{\star}U_{2t}A_x)^{\otimes n}(U_sSU_s^{\star})_i]ds
    \\&+\frac{1}{N}\sum_{1\leq i<j\leq n}\sum_{1\leq k,l\leq N}\int_0^t \mathbb{E}[(U_{2s}^{\star}U_{2t}A_s)^{\otimes n}(U_sSE_{k,l}SU_s^{\star}\otimes U_sE_{l,k}U_s^{\star})_{i,j}]ds
\\&+\frac{1}{N}\sum_{1\leq i<j\leq n}\sum_{1\leq k,l\leq N}\int_0^t \mathbb{E}[(U_{2s}^{\star}U_{2t}A_s)^{\otimes n}(U_sE_{k,l}U_s^{\star}\otimes U_sSE_{l,k}SU_s^{\star})_{i,j}]ds.
\end{align*}
Substituting $U_{2s}^{\star}U_{2t}$ by $V_{2s}^{\star}V_{2t}$ and using the stationarity of the right increments of $(V_t)_{t \geq 0}$, we are done.
\end{proof}

\section{Proofs of the main results}
This section is devoted to the proofs of both Theorems \ref{ode} and \ref{expression}. We start with: 
\begin{proof}[Proof of Theorem \ref{ode}]
Since $[(ij)]$ commute with $G_n(t)$ and since $(1\dots n)(ij)$ splits into the two disjoint cycles: 
\begin{equation*}
(12\dots i(j+1)\dots n), \quad ((i+1)\dots j),
\end{equation*}
we readily get:
\begin{align*}
\frac{d}{dt}  F_n(t)  =&\mathbb{E}\textrm{tr}\left([(1\ldots n)]G_n'(t)\right)
  \\=&-nF_n(t)-2\sum_{1\leq i<j \leq n}\mathbb{E}[\textrm{tr}(A_t^{ j-i})\textrm{tr}( A_t^{ n-(j-i)} )] +\beta \sum_{i=1}^n\mathbb{E}\textrm{tr}(A_t^{n-1}R)
  \\&+\frac{1}{N}\sum_{1\leq k,l\leq N}\sum_{1\leq i<j\leq n}\left\{\mathbb{E}\textrm{tr}[A_t^{ n-j+i-1} RU_tE_{k,l}SU_t^{\star} A_t^{ j-i-1} RU_tSE_{l,k}U_t^{\star}]\right.
 \\&\left.+\mathbb{E}\textrm{tr}[A_t^{ n-j+i-1} RU_tSE_{k,l}U_t^{\star} A_t^{ j-i-1} RU_tE_{l,k}SU_t^{\star}]\right\}.
  \end{align*}
Using the following identity, 
\begin{align}\label{magic}
\sum_{1\leq k,l\leq N} E_{k,l}ME_{l,k}= \textrm{Tr}(M)I_n, \quad M \in \mathcal{M}_N, 
\end{align}
it follows that 
\begin{eqnarray*}
\sum_{1\leq k,l\leq N} \textrm{tr}[A_t^{ n-j+i-1} RU_tE_{k,l}SU_t^{\star} A_t^{ j-i-1} RU_tSE_{l,k}U_t^{\star}] & = & \textrm{tr}(A_t^{ n-j+i-1} RU_tU_t^{\star})\textrm{Tr}(SU_t^{\star} A_t^{ j-i-1} RU_tS) \\ 
\sum_{1\leq k,l\leq N}  \textrm{tr}[A_t^{ n-j+i-1} RU_tSE_{k,l}U_t^{\star} A_t^{ j-i-1} RU_tE_{l,k}SU_t^{\star}] &= & \textrm{tr}(A_t^{ n-j+i-1} RU_tSSU_t^{\star})\textrm{Tr}(U_t^{\star} A_t^{ j-i-1} RU_t).
  \end{eqnarray*}
Consequently, 
 \begin{align*}
\frac{d}{dt} F_n(t)  & = -nF_n(t)+\beta \sum_{i=1}^n\mathbb{E}\textrm{tr}(A_t^{n-1}R) -2\sum_{1\leq i<j \leq n}\mathbb{E}[\textrm{tr}(A_t^{ j-i})\textrm{tr}( A_t^{ n-(j-i)} )] 
 \\& + 2\textrm{E}[\textrm{tr}(A_t^{n-j+i-1} R)\textrm{tr}( A_t^{ j-i-1} R)]
 \\& =-nF_n(t)+\beta \sum_{i=1}^n\mathbb{E}\textrm{tr}(A_t^{n-1}R)-2\sum_{ p=1}^{ n-1}(n-p)\mathbb{E}[\textrm{tr}(A_t^{ p})\textrm{tr}( A_t^{ n-p} )] 
  \\&+ 2\sum_{p=1}^{ n-1}(n-p)\mathbb{E}[\textrm{tr}(A_t^{ n-p-1} R)\textrm{tr}( A_t^{ p-1} R)].
  \end{align*}
  But
  \begin{align*}
  2\sum_{ p=1}^{ n-1}(n-p)\mathbb{E}[\textrm{tr}(A_t^{ p})\textrm{tr}( A_t^{ n-p} )] &=\sum_{ p=1}^{ n-1}(n-p)\mathbb{E}[\textrm{tr}(A_t^{ p})\textrm{tr}( A_t^{ n-p} )] +\sum_{p=1}^{n-1}p\mathbb{E}[\textrm{tr}(A_t^{ p})\textrm{tr}( A_t^{ n-p} )] 
 \\=&n\sum_{ p=1}^{n-1}\mathbb{E}[\textrm{tr}(A_t^{ p})\textrm{tr}( A_t^{ n-p} )] ,
  \end{align*}
 and similarly,
  \begin{align*}
  2\sum_{p=1}^{ n-1}(n-p)\mathbb{E}[\textrm{tr}(A_t^{ n-p-1} R)\textrm{tr}( A_t^{ p-1} R)] =n\sum_{p=1}^{ n-1}\mathbb{E}[\textrm{tr}(A_t^{ n-p-1} R)\textrm{tr}( A_t^{ p-1} R)].
  \end{align*}
 Altogether, we get
  \begin{align*}
\frac{d}{dt}  F_n(t)=&-nF_n(t) +\beta \sum_{i=1}^n\mathbb{E}\textrm{tr}(A_t^{n-1}R)-n\sum_{ p=1}^{ n-1}\mathbb{E}[\textrm{tr}(A_t^{ p})\textrm{tr}( A_t^{ n-p} )] 
  \\&+n\sum_{p=1}^{ n-1}\mathbb{E}[\textrm{tr}(A_t^{ n-p-1} R)\textrm{tr}( A_t^{ p-1} R)].
\end{align*}
Finally, since $R^2 = I$ then for any $n\geq1$,
\begin{align*}
 \textrm{tr}(A_t^{n-1}R)= \textrm{tr}(R(RU_tSU_t^{\star})^{n-1}) = 
 \begin{cases}
\textrm{tr}(R)=\alpha \quad: {\rm n\ odd}
\\ \textrm{tr}(S)=\beta \quad: {\rm n\ even}
\end{cases},
\end{align*}
whence it follows that for any $1\leq p\leq n-1, n \geq 2$,
\begin{align*}
  \textrm{tr}(A_t^{ n-p-1} R) \textrm{tr}( A_t^{ p-1} R)&=
  \begin{cases}
  \alpha^2 \quad: {\rm n\ even,\ p\ odd}\\
  \beta^2 \quad: {\rm n\ even,\ p\ even}\\
 \alpha\beta \quad: {\rm n\ odd}
  \end{cases}.
\end{align*}
As a result
\begin{align*}
\frac{d}{dt} F_n(t)  =&-nF_n(t) -n\sum_{ p=1}^{ n-1}\mathbb{E}[\textrm{tr}(A_t^{ p})\textrm{tr}( A_t^{ n-p} )]  +
 \begin{cases}
\alpha\beta(n+n(n-1)) \quad: {\rm n\ odd}
\\ n\beta^2+ \displaystyle \frac{n^2}{2}\alpha^2+\frac{n(n-2)}{2}\beta^2\quad: {\rm n\ even}
\end{cases}.
\end{align*}
The Theorem is proved.
\end{proof}

\begin{rem}
For $n=1$,  $F_1(t)=\mathbb{E}\textrm{tr}(A_t)$ and Lemma \ref{firstrank} yields: 
\begin{align*}
  \frac{d}{dt}  F_1(t)=-F_1(t)+\alpha\beta.
\end{align*}
Hence,
\begin{equation*}
F_1(t)=e^{-t}(\xi -\alpha \beta)+ \alpha \beta . 
\end{equation*}
\end{rem}

We now proceed to the proof of Theorem \ref{expression}. To this end, we recall the process
\begin{align*}
B_s=V_{2(t-s)}A_s=V_{2(t-s)}RU_sSU_s^{\star}, \quad 0 \leq s \leq t,
\end{align*}
which bridges between $V_{2t}$ and $A_t$.

\begin{proof}[Proof of Theorem \ref{expression}]
Letting the one-cycle $(1 \dots n)$ act on the both sides of the formula proved in the last proposition, we readily get: 
\begin{align*}
    F_n(t) & =\textrm{tr}\left([(1\ldots n)] G_n(t)\right)
    \\=&\nu_n(2 t)+\beta \sum_{i=1}^n \int_0^t \mathbb{E}\textrm{tr}\left([(1\ldots n)]  B_s^{\otimes n}(U_sSU_s^{\star})_i\right)ds
    \\&+\frac{1}{N}\sum_{1\leq i<j\leq n}\sum_{1\leq k,l\leq N}\int_0^t \left\{\mathbb{E}\textrm{tr}\left([(1\ldots n)]  B_s^{\otimes n}(U_sSE_{k,l}SU_s^{\star}\otimes U_sE_{l,k}U_s^{\star})_{i,j}\right)\right.
\\&\left.+ \mathbb{E}\textrm{tr}\left([(1\ldots n)] B_s^{\otimes n}(U_sE_{k,l}U_s^{\star}\otimes U_sSE_{l,k}SU_s^{\star})_{i,j}\right)\right\}ds
\\& = \nu_n(2t)+\beta \sum_{i=1}^n \int_0^t \mathbb{E}\textrm{tr}\left(B_s^{n-1}V_{2(t-s)}R\right)ds
    \\&+\frac{1}{N}\sum_{1\leq i<j\leq n}\sum_{1\leq k,l\leq N}\int_0^t \left\{\mathbb{E}\textrm{tr}\left(B_s^{n-j+i-1}V_{2(t-s)}RU_sE_{k,l}SU_s^{\star}B_s^{j-i-1} V_{2(t-s)}RU_sSE_{l,k}U_s^{\star}\right)\right.
\\&\left.+ \mathbb{E}\textrm{tr}\left(B_s^{n-j+i-1}V_{2(t-s)}RU_sSE_{k,l}U_s^{\star}B_s^{j-i-1} V_{2(t-s)}RU_sE_{l,k}SU_s^{\star}\right)\right\}ds.
\end{align*}
Applying the identity \eqref{magic}, we end up with: 
\begin{align*}
F_n(t)=&\nu_n(2t)+n\beta \int_0^t\mathbb{E}\textrm{tr}(B_s^{ n-1}V_{2(t-s)}R)ds
\\&+ 2 \sum_{ i=1}^{n-1}\sum_{j = i+1}^{n}\int_0^t\mathbb{E}[\textrm{tr}(B_s^{ n-j+i-1}V_{2(t-s)}R)\textrm{tr}(B_s^{j-i-1}V_{2(t-s)} R)]ds 
\\& = \nu_n(2t)+n\beta \int_0^t\mathbb{E}\textrm{tr}(B_s^{ n-1}V_{2(t-s)}R)ds
\\&+ 2 \sum_{p=1}^{n-1}(n-p)\int_0^t\mathbb{E}[\textrm{tr}(B_s^{ n-p-1}V_{2(t-s)}R)\textrm{tr}(B_s^{p-1}V_{2(t-s)} R)]ds
\\& = \nu_n(2t)+n\beta \int_0^t\mathbb{E}\textrm{tr}(B_s^{ n-1}V_{2(t-s)}R)ds
\\&+ n\sum_{p=1}^{n-1}\int_0^t\mathbb{E}[\textrm{tr}(B_s^{n-p-1}V_{2(t-s)}R)\textrm{tr}(B_s^{p-1}V_{2(t-s)} R)]ds.
\end{align*}
\end{proof}

\begin{cor}\label{trzero}
Assume $\textrm{Tr}(R)=0$. Then, for any $n \geq 2$,
\begin{equation*}
 F_n(t)=\nu_n(2 t)+n\sum_{ i=1}^{n-1}\int_0^t{\rm Cov}[\textrm{tr}(B_s^{ n-i-1}V_{2(t-s)}R),\textrm{tr}(B_s^{i-1}V_{2(t-s)} R)]ds .
\end{equation*}
\end{cor}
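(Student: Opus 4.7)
The plan is to specialize Theorem~\ref{expression} under the hypothesis $\textrm{Tr}(R)=0$. Writing the identity there in terms of the normalized trace $\textrm{tr}=\textrm{Tr}/N$,
\begin{equation*}
F_n(t) = \nu_n(2t) + n\beta \int_0^t \mathbb{E}[\textrm{tr}(B_s^{n-1}V_{2(t-s)}R)]\,ds + n \sum_{i=1}^{n-1} \int_0^t \mathbb{E}[\textrm{tr}(B_s^{n-i-1}V_{2(t-s)}R)\,\textrm{tr}(B_s^{i-1}V_{2(t-s)}R)]\,ds,
\end{equation*}
and using $\mathbb{E}[XY] = {\rm Cov}(X,Y) + \mathbb{E}[X]\,\mathbb{E}[Y]$ on each summand of the last term, the corollary reduces to the key vanishing lemma
\begin{equation*}
g_k(s,t) := \mathbb{E}[\textrm{tr}(B_s^{k}V_{2(t-s)}R)] = 0 \quad \text{for every } 0\le k\le n-1.
\end{equation*}
Indeed this immediately kills the middle $\beta$-term and makes each product $\mathbb{E}[\textrm{tr}(\cdot)]\,\mathbb{E}[\textrm{tr}(\cdot)]$ in the last term vanish, turning every $\mathbb{E}[\textrm{tr}(X)\,\textrm{tr}(Y)]$ into ${\rm Cov}(\textrm{tr}(X),\textrm{tr}(Y))$.

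To prove the vanishing lemma I would invoke the invariance of the joint law of the two independent unitary Brownian motions $(U_t)_{t\ge 0}$ and $(V_t)_{t\ge 0}$ under the adjoint action $X_t \mapsto W X_t W^{\star}$ of $\mathbb{U}_N$. Since $R$ is a self-adjoint symmetry with $\textrm{Tr}(R)=0$, its $\pm 1$-eigenspaces have equal dimensions, so there exists $W_0\in\mathbb{U}_N$ with $W_0 R W_0^{\star} = -R$, equivalently $W_0 R = -R W_0$. Performing the simultaneous substitution $U_s\mapsto W_0 U_s W_0^{\star}$ and $V_{2(t-s)} \mapsto W_0 V_{2(t-s)} W_0^{\star}$ leaves the expectation unchanged; pulling the $W_0$'s through the integrand $\textrm{tr}(B_s^k V_{2(t-s)} R)$ using the anticommutation $W_0 R = -R W_0$ together with the cyclicity of the trace, the outer conjugation $W_0\,(\cdot)\,W_0^{\star}$ disappears, while each of the $k+1$ occurrences of $R$ inside the product produces a sign, giving an overall factor $(-1)^{k+1}$. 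The only residual effect of the substitution is the replacement $S\mapsto W_0^{\star} S W_0$ inside $A_s$, which is absorbed by a further application of the adjoint invariance of $U_s$ alone. Taking expectations one obtains the sign-flip identity $g_k = -g_k$ and hence $g_k = 0$.

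With the lemma in hand, the corollary is obtained by direct substitution into Theorem~\ref{expression}: the middle $\beta$-term is zero, and each integrand of the last sum equals the covariance of the two normalized traces. The main obstacle is the careful bookkeeping in the sign-flip step: one has to track both the sign picked up from each of the $k+1$ copies of $R$ and the effect of the substitution on $S$, and check that the latter is exactly cancelled by the second invariance step without producing any spurious sign or conjugation. Modulo this bookkeeping the derivation of Corollary~\ref{trzero} from Theorem~\ref{expression} is purely algebraic.
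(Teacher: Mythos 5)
Your overall reduction is exactly the paper's: specialize Theorem \ref{expression}, write $\mathbb{E}[XY]=\mathrm{Cov}(X,Y)+\mathbb{E}[X]\,\mathbb{E}[Y]$, and try to kill the first moments $g_k(s,t)=\mathbb{E}[\textrm{tr}(B_s^{k}V_{2(t-s)}R)]$ by conjugating with a unitary $W_0$ anticommuting with $R$. The trouble is that the vanishing lemma you rely on fails for odd $k$, and your own sign count already shows why: $\textrm{tr}(B_s^{k}V_{2(t-s)}R)$ contains $k+1$ copies of $R$, so the conjugation produces the factor $(-1)^{k+1}$, which equals $-1$ only when $k$ is \emph{even}; for odd $k$ the identity reads $g_k=+g_k$ and gives nothing. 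And indeed $g_1\neq 0$ in general: at $s=t$ one has $B_t=A_t$ and $V_0=I$, so $\textrm{tr}(B_t V_0 R)=\textrm{tr}(A_tR)=\textrm{tr}(SU_t^{\star}R^2U_t)=\textrm{tr}(S)=\beta$ almost surely; more generally, $\mathbb{E}[U_sSU_s^{\star}]=e^{-s}S+(1-e^{-s})\beta I$ together with \eqref{BM} for $n=2$ gives $g_1(s,t)=\beta e^{-2(t-s)}\cosh(2(t-s)/N)$. Consequently, when $\beta\neq 0$ the summands with both exponents odd (e.g.\ $n=4$, $i=2$, where the product of expectations is $g_1^2$) do not reduce to covariances, and the $\beta$-term is not obviously zero either; your argument only covers the pairs where at least one exponent is even.

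There is a second gap even for even $k$: the residual substitution $S\mapsto W_0^{\star}SW_0$ cannot be ``absorbed by a further application of the adjoint invariance of $U_s$ alone.'' Undoing it inside $U_sSU_s^{\star}$ would require the right translation $U_s\mapsto U_sW_0^{\star}$, which does not preserve the law of the Brownian motion at finite time (the heat kernel measure is central but not translation-invariant), while conjugating $U_s$ merely relocates the $W_0$'s. So the conjugation step only yields $g_k=(-1)^{k+1}\,g_k[S\to W_0^{\star}SW_0]$, not $g_k=-g_k$. For what it is worth, the paper's own one-line proof asserts that the law of $B_s^{j-1}V_{2(t-s)}$ is rotation-invariant, which suffers from exactly the same defect since $B_s$ involves the fixed matrices $R$ and $S$; your more explicit bookkeeping exposes the difficulty rather than resolving it. To close the argument one needs either additional hypotheses (the elementary computation above suggests $\textrm{tr}(S)=0$ is also required for the odd-$k$ moments to vanish) or an entirely different justification of the vanishing of the $g_k$.
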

\begin{proof}
Since $\textrm{Tr}(R)=0$ then $N$ is even and we can find a rotation $\rho$ such that $\rho R \rho^{\star}=-R$. But the law of $B_s^{n-1}V_{2(t-s)}$ is invariant under rotations therefore for any $j \geq 1$:
\begin{equation*}
    \mathbb{E}\textrm{tr}(B_s^{j-1}V_{2(t-s)} R)=\mathbb{E}\textrm{tr}(\rho^*B_s^{j-1}V_{2(t-s)} \rho R)=-\mathbb{E}\textrm{tr}(B_s^{j-1}V_{2(t-s)} R).
\end{equation*}
Consequently,
\begin{equation*}
    \mathbb{E}\textrm{tr}(B_s^{n-1}V_{2(t-s)} R)=0,
\end{equation*}
and
\begin{equation*}
  \mathbb{E}[\textrm{tr}(B_s^{ n-i-1}V_{2(t-s)}R)\textrm{tr}(B_s^{i-1}V_{2(t-s)} R)]={\rm Cov}[\textrm{tr}(B_s^{ n-i-1}V_{2(t-s)}R),\textrm{tr}(B_s^{i-1}V_{2(t-s)} R)],
\end{equation*}
as claimed.
\end{proof}
\begin{rem}
The same expression obviously holds when $\beta = 0$. This is in agreement with the fact that $R$ and $S$ play the same role since $U$ and $U^{\star}$ have the same distribution. 
\end{rem}

\section{A second proof of Theorem \ref{expression}}
In this section, we write a second proof of Theorem \ref{expression} relying on the bridge
\begin{align*}
B_s=V_{2(t-s)}A_s, \quad s\in[0,t].
\end{align*} 
To this end, we define 
\begin{align*}
H_n(s) := \mathbb{E}[\textrm{tr} (B_s)^n]=\mathbb{E}[\textrm{tr}(V_{2(t-s)}A_s)^n]
\end{align*}
so that we obviously have $\mathbb{E}[\textrm{tr} B_0^n]=\mathbb{E}[\textrm{tr}(V_{2t}RS)^n]=\nu_n(2 t)$ and $\mathbb{E}[\textrm{tr}B_t^n]=\mathbb{E}[\textrm{tr}A_t^n]=F_n(t)$. 
\begin{proof}[Second proof of Theorem \ref{expression}]
Consider the matrix-valued function: 
\begin{align*}
K_n(s)=\mathbb{E}\left(( B_s)^{\otimes n}\right)=\mathbb{E}\left((V_{2(t-s)})^{\otimes n}\right)\mathbb{E}\left((A_s)^{\otimes n}\right)=\tilde G_n(s)G_n(s),
\end{align*}
where $\tilde G_n(s)=\mathbb{E}\left((V_{2(t-s)})^{\otimes n}\right)$. 
Using 
\begin{align}\label{homogene}
   \frac{d}{ds} \tilde G_n(s)=n\tilde G_n(s)+\frac{2}{N}\sum_{1\leq i<j\leq n}\tilde G_n(s)[(ij)],
\end{align}
which readily follows from \eqref{BM}, together with Corollary \ref{tensor_ode} and the independence of $V$ and $A$, we derive: 
\begin{align*}
 \frac{d}{ds}K_n(s) & = \frac{d\tilde G_n}{ds} (s)G_n(s) + \tilde G_n(s)\frac{dG_n}{ds}(s)
 = \beta \mathbb{E}\left[V_{2(t-s)}^{\otimes n}A_s^{\otimes n}\sum_{i=1}^n(U_sSU_s^{\star})_i\right]
    \\&+\frac{1}{N}\mathbb{E}\left[V_{2(t-s)}^{\otimes n}A_s^{\otimes n}\sum_{1\leq i<j\leq n}\sum_{1\leq k,l\leq N}(U_sSE_{k,l}SU_s^{\star}\otimes U_sE_{l,k}U_s^{\star})_{i,j}\right]
\\&+\frac{1}{N}\mathbb{E}\left[V_{2(t-s)}^{\otimes n}A_s^{\otimes n}\sum_{1\leq i<j\leq n}\sum_{1\leq k,l\leq N}(U_sE_{k,l}U_s^{\star}\otimes U_sSE_{l,k}SU_s^{\star})_{i,j}\right]
\\=&\beta \mathbb{E}\left[B_s^{\otimes n}\sum_{i=1}^n(U_sSU_s^*)_i\right]
\\&+\frac{1}{N}\mathbb{E}\left[B_s^{\otimes n}\sum_{1\leq i<j\leq n}\sum_{1\leq k,l\leq N}(U_sSE_{k,l}SU_s^{\star}\otimes U_sE_{l,k}U_s^{\star})_{i,j}+(U_sE_{k,l}U_s^{\star}\otimes U_sSE_{l,k}SU_s^{\star})_{i,j}\right]
\end{align*}
Applying $\textrm{tr}(1\ldots n)$ to both sides of the last equality, we further get:
\begin{align*}
\frac{d}{ds} H_n(s) &= \textrm{tr}\left[(1\ldots n)\frac{dK_n}{ds}(s)\right] =n\beta \mathbb{E}\textrm{tr}[ B_s^{ n}U_sSU_s^{\star}]
\\&+\frac{1}{N}\sum_{1\leq i<j\leq n}\sum_{1\leq k,l\leq N}\left\{\mathbb{E}\textrm{tr}\left[B_s^{ n-j+i}U_sSE_{k,l}SU_s^{\star} B_s^{j-i} U_sE_{l,k}U_s^{\star}\right]\right.
\\&\left. +\mathbb{E}\textrm{tr}\left[ B_s^{ n-j+i}U_sE_{k,l}U_s^{\star} B_s^{j-i} U_sSE_{l,k}SU_s^{\star}\right]\right\}.
\end{align*}
Performing the index change $p=j-i$ in the last sum, we rewrite it as
\begin{align*}
 \frac{1}{N}\sum_{ p=1}^{ n-1}(n-p)\sum_{1\leq k,l\leq N}\left\{\mathbb{E}\textrm{tr}[ B_s^{ n-p}U_sSE_{k,l}SU_s^{\star} B_s^{p} U_sE_{l,k}U_s^{\star}] + \mathbb{E}\textrm{tr}[ B_s^{ n-p}U_sE_{k,l}U_s^{\star} B_s^{p} U_sSE_{l,k}SU_s^{\star}]  \right\}.
\end{align*}
Finally, the identity \eqref{magic} entails
\begin{align*}
   \frac{d}{ds} H_n(s)=&n\beta \mathbb{E}\textrm{tr}[ B_s^{ n-1}V_{2(t-s)}R]
+\frac{2}{N^2}\sum_{p=1}^{ n-1}(n-p)\mathbb{E}[\textrm{Tr}( B_s^{ n-p-1}V_{2(s-t)}R)\textrm{Tr}( B_s^{p-1}V_{2(t-s)} R)]
\\=&n\beta \mathbb{E}\textrm{tr}[ B_s^{ n-1}V_{2(t-s)}R]
+ n\sum_{p=1}^{ n-1}\mathbb{E}[\textrm{tr}( B_s^{ n-p-1}V_{2(t-s)}R)\textrm{tr}( B_s^{p-1}V_{2(t-s)} R)],
\end{align*}
so that formula \eqref{Exp} follows from the fundamental Theorem of calculus:
\begin{align*}
F_n(t)-\nu_n(2 t)=H_n(t)-H_n(0) = \int_0^t\frac{dH_n}{ds}(s)ds.
\end{align*}
\end{proof}
\begin{rem}
We can rewrite formula \eqref{Exp} as: 
\begin{align*}
F_n(t)=&\nu_n(2 t)+ n\beta \int_0^t \mathbb{E}\textrm{tr}[B_s^{ n-1}V_{2(t-s)}R]ds + n \sum_{p=1}^{n-1}\int_0^t \mathbb{E}\textrm{tr}( B_s^{ n-p-1}V_{2(t-s)}R)\mathbb{E}\textrm{tr}( B_s^{p-1}V_{2(t-s)} R) ds
\\&+n\sum_{p=1}^{n-1}\int_0^t {\rm cov}\left(\textrm{tr}( B_s^{ n-p-1}V_{2(t-s)}R), \textrm{tr}( B_s^{p-1}V_{2(t-s)} R)\right)ds. 
\end{align*}
Thus, the determination of the large-N limit of $F_n(t)$ will follow from the large-N limit of the covariance term. Once this limit will be determined, it will provide a general expression for the moments of the free Jacobi process. 
\end{rem}


\begin{thebibliography}{99}
\bibitem{Biane}\emph{P. Biane}. Free Brownian motion, free stochastic calculus and random matrices. {\it Fields. Inst. Commun.},  {\bf 12}, Amer. Math. Soc. Providence, RI,  1997. 1-19.
\bibitem{BDN}\emph{R. Bonnefoi, N. Demni, A. Nafkha}. New expressions for ergodic capacities of optical fibers and wireless MIMO channels. {\it arXiv:1511.06074}. 
\bibitem{Col}\emph{B. Collins}. Product of random projections, Jacobi ensembles and universality problems arising from free probability. {\it Probab. Theory Related Fields} 133 {\bf 3} (2005),  315-344.
\bibitem{CK}\emph{B. Collins, T. Kemp}. Liberation of Projections. {\it J. Func. Anal}. {\bf 266}, (2014), 1988-2052.
\bibitem{CDK}\emph{B. Collins, A. Dahlqvist, T. Kemp}. The Spectral Edge of Unitary Brownian Motion. {\it Probab. Theory Related Fields}. {\bf 170}, no. 1-2, (2018), 49-93.
\bibitem{Dah}\emph{A. Dahlqvist}. Integration formula for Brownian motion on classical compact Lie groups.  {\it Ann. Inst. Henri Poincar\'e Probab. Stat}
\bibitem{DFS}\emph{R. Dar, M. Feder, M. Shtaif}. The Jacobi MIMO channel. {\it Information Theory, IEEE Transaction on } vol. 59 {\bf 4} (2013), 2426-2441.
\bibitem{DD}\emph{L. Deleaval, N. Demni}. Moments of the Hermitian Matrix Jacobi Process. {\it Theor Probab} vol. 31 {\bf 3} (2018), 1759-1778.
\bibitem{Dem}\emph{N. Demni}. Free Jacobi processes. {\it J. Theor. Proba}. {\bf 21} (2008), 118-143.
\bibitem{Demni}\emph{N. Demni}. Free Jacobi process associated with one projection: local inverse of the flow. {\it Complex Anal. Oper. Theory}. { \bf 10} (2016), 527-543.
\bibitem{Dem-Ham}\emph{N. Demni, T. Hamdi}. Inverse of the flow and moments of the free Jacobi process associated with one projection. {\it Random Matrices: Theory and Applications}. {\bf Vol. 7}. No. 2, (2018), 19 pages.
\bibitem{DHH}\emph{N. Demni, T. Hamdi, T. Hmidi}. Spectral distribution of the free Jacobi process. {\it Indiana Univ. Math. J}. {\bf 61} (2012), 1351-1368.
\bibitem{Dem-Hmi}\emph{N. Demni, T. Hmidi}. Spectral distribution of the free Jacobi process associated with one projection. {\it Colloq. Math.} {\bf 137} (2014), 271-296.
\bibitem{Erd-Far}\emph{L. Erd\'os, B. Farrell}.  Local eigenvalue density for general MANOVA matrices. {\it J. Stat. Phys. 152} {\bf 6} (2013), 1003-1032.
\bibitem{Haa-Lar}\emph{U. Haagerup, F. Larsen.} Brown's spectral distribution measure for R-diagonal elements in finite von Neumann algebras. {\it J. Funct. Anal.} {\bf 176} (2000), no. 2, 331-367. 
\bibitem{Hia-Miy-Ued}\emph{F. Hiai, T. Miyamoto and Y. Ueda}. Orbital approach to microstate free entropy. {\it Internat. J. Math.} {\bf 20} (2009), 227-273. 
\bibitem{Hia-Ued}\emph{F. Hiai, Y. Ueda}. A log-Sobolev type inequality for free entropy of two projections. {\it Ann. Inst. H. Poincaré Probab. Statist.} {\bf 45} (2009), 239-249. 
\bibitem{Izu-Ued}\emph{M. Izumi, Y. Ueda.} Remarks on free mutual information and orbital free entropy. {\it Nagoya Math. J} {\bf 220} (2015), 45-66. 
\bibitem{Joh}\emph{I. M. Johnstone}. Multivariate analysis and Jacobi ensembles: largest eigenvalue, Tracy-Widom limits and rates of convergence. {\it Ann. Statist. 36} {\bf 6} (2008), 2638-2716. 
\bibitem{Levy}\emph{T. L\'evy}. Schur-Weyl duality and the heat kernel measure on the unitary group. {\it Adv. Math}. {\bf 218}, 2008, no. 2, 537-575. 
\bibitem{Lia}\emph{M. Liao}. L\'evy processes in Lie groups. {\it Cambridge Tracts in Mathematics, 162, Cambridge University Press, Cambridge}, 2004.
\bibitem{Meh}\emph{ M. L. Mehta}. Random Matrices. {\it  Academic Press Inc. Boston} MA, second edition, 1991.
\bibitem{Tar1}\emph{T. Hamdi}. Liberation, free mutual information and orbital free entropy.  {\it Nagoya Math. J.}. (2018), 1-27.
\bibitem{Tar2}\emph{T. Hamdi}. Free mutual information for two projections. {\it Complex Anal. Oper. Theory}. Vol. 12, {\bf 7} (2018), 1697-1705.
\bibitem{Tar3}\emph{T. Hamdi}. Spectral distribution of the free Jacobi process, revisited. {\it Anal. PDE}. {\bf 11}, (2018), no. 8, 2137-2148. 
\bibitem{Viv}\emph{P. Vivo}. From Wishart to Jacobi ensembles: statistical properties and applications. {\it Thesis Brunel University, London} 2008.
\bibitem{Voi}\emph{D. V. Voiculescu}. The analogues of entropy and of Fisher's information measure in free probability theory. VI. Liberation and mutual free information. {\it Adv. Math.} {\bf 146} (1999), 101-166.
\bibitem{Zho}\emph{P. Zhong}. On the free convolution with a free multiplicative analogue of the normal distribution. {\it J. theor. Probab.} {\bf 28} (2015),  1354-1379.
\end{thebibliography}
\end{document}